\newtheorem{theorem}{Theorem}[section]
\newtheorem{lemma}[theorem]{Lemma}
\theoremstyle{definition}
\newtheorem{remark}[theorem]{Remark}
\newtheorem{conjecture}[theorem]{\bf{Conjecture}}
\newtheorem{question}[theorem]{\bf {Question}}
\newtheorem{prop}{\bf {Proposition}}[section]
\numberwithin{equation}{section}
\newcommand{\N}{\mathbb{N}}
\newcommand{\Z}{\mathbb{Z}}
\newcommand{\C}{\mathbb{C}}
\begin{document}


\baselineskip=17pt


\title[]{The orbit intersection problem in positive characteristic}

\author[S. S. Rout]{Sudhansu Sekhar Rout}
\address{Department of Mathematics,
National Institute of Technology Calicut - 673 601, Kozhikode,  India.}
\email{sudhansu@nitc.ac.in}

\date{}

\begin{abstract}
In this paper, we study the orbit intersection problem for the linear space and the algebraic group in positive characteristic. Let $K$ be an algebraically closed field of positive characteristic and let $\Phi_1, \Phi_{2}: K^d \longrightarrow K^{d}$ be affine maps,  $\Phi_i({\bf x}) = A_i ({\bf x}) + {\bf x_i}$  (where each $A_i$ is a $d\times d$ matrix and ${\bf x}\in K^d$). If none of the eigenvalues of the matrices $A_i$ are roots of unity and each ${\bf a}_i \in K^d$ is not $\Phi_i$-preperiodic, then we prove that the set 
$\left \{(n_1, n_2) \in \Z^{2} \mid \Phi_1^{n_1}({\bf a}_1) =  \Phi_{2}^{n_{2}}({\bf a}_{2})\right\}$ is $p$-normal in $\mathbb{Z}^{2}$ of order at most $d$. Further, let $\Phi_1,  \Phi_{2}: \mathbb{G}_m^d \longrightarrow \mathbb{G}_m^d$ be regular self-maps and ${\bf a}_1,  {\bf a}_2\in \mathbb{G}_m^d(K)$. Let $\Phi_1^0$ and $\Phi_2^0$ be group endomorphisms of $\mathbb{G}_m^d$ and  ${\bf y}, {\bf z} \in \mathbb{G}_m^d(K)$ such that $\Phi_1({\bf x}) = \Phi_1^{0}({\bf x}) + {\bf y}$ and $\Phi_2({\bf x}) = \Phi_2^{0}({\bf x}) + {\bf z}$. We show, under some conditions on the roots of the minimal polynomial of $ \Phi_1^{0}$ and $\Phi_2^{0}$, that the set 
$ \{(n_1, n_{2}) \in \N_0^{2} \mid \Phi_1^{n_1}({\bf a}_1) =  \Phi_{2}^{n_{2}}({\bf a}_{2})\}$ (where ${\bf a}_1,  {\bf a}_2\in \mathbb{G}_m^d(K)$) is a finite union of singletons and one-parameter linear families. To do so, we use results on linear equations over multiplicative groups in positive characteristic and  some results on systems of polynomial-exponential equations.
\end{abstract}

\subjclass[2020]{Primary 37P55; Secondary 11B37, 11D61}

\keywords{Dynamical Mordell-Lang Conjecture, algebraic variety, linear recurrence sequences, orbit intersections}

\maketitle

\section{Introduction}

Let $\mathbb{N}$ denote the set of positive integers and $\mathbb{N}_0 := \mathbb{N}\cup \{0\}$. For a set $X$ endowed with self map $\Phi$ and for any $m \in \mathbb{N}_{0}$, we denote by $\Phi^{m}$ the $m$-th iteration $\Phi\circ  \cdots \circ \Phi$ of $\Phi$ with $\Phi^{0}$ denoting the identity map on $X$. If $x\in X$, we define the {\em forward orbit} $\mathrm{Orb}_{\Phi}^{+}(x) := \{\Phi^{m}(x)\mid  m \in \mathbb{N}_{0}\}$. Similarly, if the self map $\Phi$ is invertible, then the {\em backward orbit} is defined as the collection of inverse images $\mathrm{Orb}_{\Phi}^{-}(x) :=\cup_{n \geq 0} \Phi^{-n}(x)$. In this paper, we are dealing with some problems of dynamical Mordell-Lang conjecture in positive characteristics. 

\subsection{Overview in characteristic $0$} 

Let $C$ be a curve of genus $g\geq 2$ defined over a number field $K$ of characteristic $0$. The classical Mordell conjecture states that the curve $C$ has finitely many $K$- rational points. Further, Lang conjectured that if $V$ is a subvariety of a semiabelian variety $G$ defined over $\C$ and $\Gamma$ is a finitely generated subgroup of $G(\C)$, then $V(\C)\cap \Gamma$ is a union of at most finitely many translates of subgroups of $\Gamma$. This conjecture was proved by Faltings \cite{fal} for abelian varieties and Vojta \cite[Theorem 0.2]{voj} for semiabelian varieties.
Motivated by the classical Mordell-Lang conjecture, various authors ( \cite{den}, \cite[Question 7.1]{bell}, \cite[Conjecture 1.7]{gt9}) have proposed the following dynamical Mordell-Lang conjecture.
\begin{conjecture}\label{con1}
Let $X$ be a quasi-projective variety defined over a number field $K$ of characteristic $0$, let $Y$ be any subvariety of $X$, let $\alpha \in X(K)$ and let $\Phi$ be an endomorphism on $X$.  Then the set 
\begin{equation}\label{eq1}
\{m\in \mathbb{N}_{0}\mid  \Phi^{m}(\alpha)\in Y(K)\}
\end{equation}
is a finite union of arithmetic progressions.
\end{conjecture}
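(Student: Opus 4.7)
The plan is to follow the $p$-adic interpolation strategy initiated by Bell and refined by Bell--Ghioca--Tucker, which generalizes the classical Skolem--Mahler--Lech theorem. The underlying idea is to exploit a well-chosen $p$-adic place to produce analytic interpolations of the orbit whose intersections with $Y$ can be controlled by Strassman-type rigidity.

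First I would reduce to the case where $K$ is finitely generated over $\Q$ and $X$ is affine, embedded in some $\mathbb{A}^{N}$. A standard spreading-out argument then lets us work over a finitely generated $\Z$-subalgebra $R\subset K$ on which $X$, $Y$, $\alpha$ and $\Phi$ are all defined. I would then choose a prime $\mathfrak{p}$ of $R$ of good reduction, lying above a rational prime $p$ large enough that in the completion $R_{\mathfrak{p}}$ the endomorphism $\Phi$ induces a map that is $p$-adically analytic on each residue disk around a point of $X(R_{\mathfrak{p}})$.

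Since the residue field at $\mathfrak{p}$ is finite, the forward orbit of $\alpha$ modulo $\mathfrak{p}$ is eventually periodic. I would partition $\N_{0}$ into finitely many arithmetic progressions $a+N\N_{0}$, according to which residue class $\Phi^{m}(\alpha) \bmod \mathfrak{p}$ belongs to, so that within each progression the iterates stay in a single residue disk $D$ fixed setwise by $\Phi^{N}$. On each such disk the key step is to construct a $p$-adic analytic interpolation $\gamma\colon \Z_{p}\to D$ satisfying $\gamma(m)=\Phi^{Nm+a}(\alpha)$ for every $m\in \N_{0}$. Once this interpolation is available, the preimage $\gamma^{-1}(Y)$ is a $p$-adic analytic subset of $\Z_{p}$, so by Strassman's theorem (or by Weierstrass preparation) it is either finite or all of $\Z_{p}$. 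The finite case contributes only finitely many exceptional values in that progression, while the second case contributes the entire arithmetic progression; taking the union over the finitely many residue classes yields the claimed decomposition.

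The main obstacle is the construction of the interpolating arc $\gamma$ when $\Phi$ fails to be \'etale at the relevant periodic point. In the \'etale case a $p$-adic logarithm--exponential construction applied to the differential of $\Phi^{N}$ at the periodic point produces $\gamma$ directly, which recovers the theorem of Bell--Ghioca--Tucker. In the non-\'etale case, however, the local dynamics on a residue disk can be genuinely pathological and no purely $p$-adic method is known to produce $\gamma$ without additional geometric hypotheses — this is precisely why Conjecture~\ref{con1} remains open outside of various special classes of endomorphisms (\'etale maps, polarizable morphisms, split cases on tori and abelian varieties, etc.), and is one of the motivations for studying the positive-characteristic analogue in the present paper.
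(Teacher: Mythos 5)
This statement is Conjecture~\ref{con1}, not a theorem; the paper does not prove it and makes no claim to. It simply records the Dynamical Mordell--Lang Conjecture in characteristic $0$ as background motivation before turning to the positive-characteristic setting that is the actual subject of the paper. So there is no ``paper's own proof'' to compare against.

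Your write-up correctly recognizes this. The $p$-adic interpolation outline you give --- spreading out over a finitely generated $\Z$-algebra, choosing a prime of good reduction, using finiteness of the residue field to split $\N_0$ into progressions landing in fixed residue disks, interpolating $\Phi^{Nm+a}(\alpha)$ by a $p$-adic analytic arc $\gamma$, and applying Strassman's theorem to $\gamma^{-1}(Y)$ --- is exactly the Bell--Ghioca--Tucker strategy, and it does prove the conjecture when $\Phi$ is unramified (\'etale) along the relevant orbit, which is \cite[Bell--Ghioca--Tucker]{bgt10}. You also correctly identify the genuine obstruction: when $\Phi$ is not \'etale at the attracting periodic point of the residue disk, no general construction of the interpolating arc $\gamma$ is known, and this is precisely why the conjecture is still open. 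So what you have submitted is not a proof of the statement (none is known), but it is an accurate account of the standard approach, of how far it reaches, and of where it breaks down. That is the right conclusion to draw for a statement labeled as a conjecture.

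One small caution: you should be explicit that the Strassman step alone does not give ``finite union of arithmetic progressions'' unless the analytic set $\gamma^{-1}(Y)$ being all of $\Z_p$ in some disk corresponds exactly to a full progression and the finite exceptional sets from the other disks are collected at the end; as written this is implicit, and it is worth spelling out that the ``arithmetic progression'' in the conclusion includes singletons (the $m=0$ case mentioned in the paper right after the conjecture).
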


An arithmetic progression is a set of the form $\{mk+\ell: k\in \mathbb{N}_0\}$ for some $m, \ell \in \mathbb{N}_0$ and when $m=0$, this set is a singleton. Conjecture \ref{con1} implies the cyclic case in the classical Mordell-Lang conjecture when $X$ is a semiabelian variety and $\Phi$ is the translation by a point $x\in X(K)$. It is natural to ask for a generalization of dynamical Mordell-Lang conjecture to a statement which would contain as a special case the full statement of classical Mordell-Lang conjecture. The \emph{general dynamical Mordell-Lang conjecture} is stated as follows: Suppose $\Phi_1, \ldots, \Phi_r$ are commuting $K$-morphisms from $X$  to itself and $x\in X(K)$. Then the set 
\[\{(n_1, \ldots, n_r) \in \mathbb{N}_0^r \mid \Phi_1^{n_1} \circ \cdots \circ\Phi_r^{n_r}(x) \in V(K)\}\]
is a finite union of translates of subsemigroups of $\mathbb{N}_0^r$. 

There is an extensive literature proving various special cases of Conjecture \ref{con1}. For example, Ghioca and Tucker \cite[Theorem 1.8]{gt9} proved when $\Phi$ is any algebraic group endomorphism  of a semiabelian variety. The case when $\Phi$ is \'etale endomorphism of any quasi-projective variety is solved in \cite{bgt10}. Then Xie \cite[Theorem A]{xie} proved this for birational endomorphism of the affine plane. Ostafe and Sha obtained \cite{os} quantitative version of Conjecture \ref{con1} for polynomial morphisms of several special types when $X$ is the affine $m$-space and $Y$ is a hypersurface. For a detailed survey of recent works on Conjecture \ref{con1} (see \cite{bgt16}). Further, Ghioca et. al., \cite {gtz11} have obtained various results for the general dynamical Mordell-Lang problem when $X$ is a semiabelian variety and the self maps are endomorphisms satisfying certain technical conditions.  But,  not much is known even when we restrict to the following special case called the {\it orbit intersection problem} (see \cite[p.1264]{gn}): 
\begin{question}\label{que1}
Let $X$ be a variety over algebraically closed field $K$ of characteristic $0$, let $r \geq 2$ be a positive integer. For $1 \leq i \leq r$, let $\Phi_i$ be a $K$-morphism from $X$ to itself, and let $\alpha_i \in X(K)$  is not $\Phi_i$-preperiodic. When
can we conclude that the set 
\[S := \{(n_1, \ldots, n_r ) \in \N^r_0 \mid \Phi_1^{n_1}(\alpha_1) = \cdots = \Phi_r^{n_r}(\alpha_r)\}\]
is a finite union of sets of the form $\{(n_1k + \ell_1,\ldots,n_rk + \ell_r): k \in \N_0\}$ for some $n_1,\ldots ,n_r, \ell_1,\ldots ,\ell_r \in \N_0$?
\end{question}

Question \ref{que1} is known for the case when $X = \mathbb{P}^1_K$ and each $\Phi_i$ is a polynomial of degree larger than $1$  (see \cite[Theorem 1.1]{gtz08} and \cite[Theorem 1.1]{gtz12}). Later, Question \ref{que1} is answered when $X$ is a semiabelian variety \cite[Theorem 1.4]{gn} and when $X = \mathbb{A}_K^{n}$ and the self maps are affine transformations \cite[Theorem 1.6]{gn}. Further,  various upper bounds are derived for the orbit intersection problem when $X$ is an affine $n$-space and self maps are polynomial morphisms of special types (see  \cite[p.125-127]{r20}).
 
\subsection{Overview in positive characteristic} 

The picture in positive characteristic is very much different. From the following example, one can observe that the Mordell-Lang conjecture is not true in positive characteristic.  From now onwards, we let $p$ be a prime number and $q = p^e$ for some positive integer $e$. 

Let $K = \mathbb{F}_p(t)$ be the field of rational functions over the field of size $p$. Let $G:= \mathbb{G}_m^2$ be a semiabelian variety defined over $K$ and $V$ be the subvariety defined by the equation $x + y =1$. Let $\Gamma$ be  the subgroup of $G(K)$ generated by $(t, 1 - t)$. Then $V(K) \cap \Gamma = \{(t^{p^e}, (1-t)^{p^e})\mid e \in \mathbb{N}_0\}$ and this set cannot be expressed as a finite union of translates of subgroups of $\Gamma$.

Using model theoretic ideas, Hrushovski \cite[Theorem 1.1]{h96} proved the classical Mordell-Lang conjecture in positive characteristic. Moosa-Scanlon \cite{ms} proved a form of the classical Mordell-Lang conjecture for semiabelian varieties defined over finite fields. Motivated by the work of Moosa-Scanlon \cite{ms}, the following conjecture was proposed (see \cite[Conjecture 13.2.0.1]{bgt16}, \cite[p. 1152]{g19}). 

\begin{conjecture}[Dynamical Mordell-Lang Conjecture in postive characteristic]\label{con2}
Let $X$ be a quasi-projective variety defined over a field $K$ of characteristic $p$, let $Y$ be any subvariety of $X$ defined over $K$, let $\alpha \in X(K)$ and let $\Phi: X\longrightarrow X$ be an endomorphism defined over $K$.  Then the set $\{m\in \mathbb{N}_{0}\mid  \Phi^{(m)}(\alpha)\in Y(K)\}$ is a union of finitely many arithmetic progressions along with  finitely many sets of the form 
\begin{equation}\label{eq2}
\left\{\sum_{j=1}^{m}c_jp^{k_jn_j} \mid n_j\in \mathbb{N}_{0},\; j= 0,1, \ldots, m\right\},
\end{equation}
for some $m\in \mathbb{N}$, for some $c_j \in \mathbb{Q}$, and some $k_j \in \mathbb{N}_0$.
\end{conjecture}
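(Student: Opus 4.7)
The plan is to follow the Skolem--Mahler--Lech template that succeeds in characteristic zero, modified so as to accommodate the Frobenius phenomena in positive characteristic illustrated by the example preceding Conjecture \ref{con2}. First I would reduce to the case where $X$ is affine and $\Phi$ is given by polynomial coordinates; since only the forward orbit matters, this is accomplished by covering the Zariski closure of $\{\Phi^m(\alpha)\}_{m\geq 0}$ by finitely many affine opens and partitioning \eqref{eq1} along this cover. Replacing $X$ by the orbit closure, one may further assume $X$ is irreducible and $\alpha$ has Zariski-dense forward orbit.

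Next I would try to linearize the dynamics in one of two ways: either (a) find a $\Phi$-equivariant map from $X$ to a semiabelian variety $G$ through which $\alpha$ and $Y$ have meaningful images, so that the Moosa--Scanlon theorem on intersections of a subvariety of $G$ with a finitely generated subgroup yields exactly the $F$-set structure on the right-hand side of Conjecture \ref{con2}; or (b) show that $\Phi$ preserves a nontrivial fibration and induct on $\dim X$. In parallel one writes the coordinates of $\Phi^m(\alpha)$ as polynomial--exponential sums $\sum_i P_i(m)\lambda_i^m$ via the Jordan decomposition of a linear representation of $\Phi$ on an ambient affine space restricted to the orbit closure, and translates the condition $\Phi^m(\alpha)\in Y(K)$ into the simultaneous vanishing of finitely many such sums.

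The crucial technical step is the analysis of vanishing loci of polynomial--exponential expressions in characteristic $p$. Classically the $p$-adic logarithm converts such vanishing into the zero set of a $p$-adic analytic function, which Strassmann's theorem forces to be finite on each residue disc; in positive characteristic this path is blocked, and the zero set genuinely contains sets of shape \eqref{eq2}, reflecting the existence of nontrivial subvarieties of $\mathbb{G}_m^n$ stable under a Frobenius twist. The main obstacle is step (a): in full generality there is no known procedure for realizing an arbitrary endomorphism of a quasiprojective variety as compatible with a semiabelian group action, and so Conjecture \ref{con2} in its stated generality is beyond the reach of this line of attack. Consequently I expect the body of the paper to carry out the program only in the two explicit settings announced in the abstract, where the matrix $A_i$ in the affine case and the monomial structure in the $\mathbb{G}_m^d$ case make the descent to a finitely generated multiplicative group setting transparent, and where the requisite linear-equation and polynomial-exponential equation results over multiplicative groups in characteristic $p$ can be invoked directly.
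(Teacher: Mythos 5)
You have correctly recognized that the statement is a \emph{conjecture}, not a theorem: the paper does not prove Conjecture \ref{con2} and no proof is expected. The paper merely records it (attributing the formulation to Corvaja--Ghioca--Scanlon--Zannier and Ghioca), lists the special cases in which it is known (semiabelian $X$ over a finite field with diagonalizable $\Phi$, curves and low-dimensional subvarieties of $\mathbb{G}_m^N$, and $\mathbb{G}_m^N$ with a non-Frobenius-like group endomorphism), and then proves two \emph{different} statements, Theorems \ref{th1} and \ref{th2}, concerning the two-orbit intersection set $\{(n_1,n_2):\Phi_1^{n_1}(\mathbf{a}_1)=\Phi_2^{n_2}(\mathbf{a}_2)\}$ rather than the return set $\{m:\Phi^m(\alpha)\in Y\}$. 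Your sketch of the Skolem--Mahler--Lech template, the breakdown of the $p$-adic analytic step in characteristic $p$, and the bottleneck at realizing an arbitrary endomorphism inside a semiabelian group action, is a fair diagnosis of why the general conjecture is open, and your final paragraph accurately anticipates that the paper restricts to the affine and toric settings where one can descend to linear equations over finitely generated multiplicative groups (Derksen--Masser) and to polynomial--exponential systems (Schlickewei--Schmidt, Laurent). In short: there is no proof in the paper to compare against, and your assessment that a proof is out of reach in this generality is consistent with the paper's own treatment of the statement as a conjecture.
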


As of now, Conjecture \ref{con2} is known in the following cases. If $X$ is a semiabelian variety defined over finite field and $\Phi$ is an algebraic group endomorphism whose action on the tangent space at the identity is given through a diagonalization matrix  (see \cite[Proposition 13.3.0.2]{bgt16}). Further, if $X = \mathbb{G}_m^N$ and $Y\subset X$ is a curve (see \cite[Theorem 1.3]{g19}) and if $X = \mathbb{G}_m^N, \;Y\subset X$ be a subvariety of dimension at most equal to $2$ and $\Phi$ is any regular self map (see \cite[Theorem 1.2]{c19}). Also, Conjecture \ref{con2} holds for any subvariety $Y\subset \mathbb{G}_m^N$ assuming $\Phi$ is an algebraic group endomorphism with the property that no iterate of it restricts to being a power of the Frobenius on the proper algebraic subgroup $\mathbb{G}_m^N$ (see \cite[Theorem 1.3]{c19}). Conjecture  \ref{con2} seems to be very difficult. To solve in case of arbitrary subvarieties of $X = \mathbb{G}_m^N$, it leads to difficult questions involving polynomial-exponential equations. Recently Ghioca et al., \cite{goss} obtained a general quantitative result in support of the case of endomorphisms of semiabelian varieties of Conjecture \ref{con2}.

In this paper, we study Question \ref{que1} in positive characteristic. From the following example (see \cite[p.1267]{gn}), one can see that Question \ref{que1} fails for affine maps defined over $\mathbb{F}_p(t)$. Let $\Phi_i : \mathbb{A}^1 \longrightarrow \mathbb{A}^1$ be affine maps defined by $\Phi_1(x) =  t(x -1) +1$ and $\Phi_2(x) = (t+1)x$. Now it is easy to see $\Phi_1^{n_1} (2)= t^{n_1} +1$ and $\Phi_2^{n_2} (1)= (t+1)^{n_2}$. Then the set 
\[\left\{(n_1, n_2)\in \mathbb{N}_0^2 \mid \Phi_1^{n_1}(2) = \Phi_2^{n_2} (1) \right\} = \left\{(p^n, p^n)\mid n \in \mathbb{N}_0\right\}\]
 is not a finite union of arithmetic progressions. This example motivate us to consider the orbit intersection problem in positive characteristic.  In order to state our theorems, we need the following definitions  (see \cite{der15}):

\smallskip

Suppose that $k \geq 1, {\bf c_0}, {\bf c_1}, \ldots, {\bf c_k} \in \mathbb{Q}^m$ with $(q - 1){\bf c_i} \in \mathbb{Z}^m$ for all $i$ , ${\bf c_0} + \cdots+ {\bf  c_k}\in \mathbb{Z}^m$. Then we define 
\[S_q({\bf c_0}; {\bf c_1}, \ldots, {\bf c_k} )= \left\{ {\bf c_0} +{\bf c_1}q^{f_1}+ \cdots+ {\bf  c_k}q^{f_k}\mid f_1, \ldots, f_k \in \mathbb{N}_0 \right\}.\]
The conditions on $ {\bf c_0}, {\bf c_1}, \ldots, {\bf c_k}$ imply that $S_q({\bf c_0}, {\bf c_1}, \ldots, {\bf c_k}) \subset \mathbb{Z}^m$. We call it an {\it elementary $p$-nested set} in $\mathbb{Z}^m$ of oder at most $k$. We define a {\it $p$-normal set} in $\mathbb{Z}^m$ of order at most $k$ as a finite union of singletons and sums $R + S$, where $R$ is a subgroup of $\mathbb{Z}^m$ and $S$ is either a singleton or an elementary $p$-nested set in $\mathbb{Z}^m$ of order at most $k$. For $m =1$, this definition was first introduced by Derksen \cite[p.177]{der07} to study the zero set of linear recurrence sequences in positive characteristic. Now we are ready to state our first result.

\begin{theorem}\label{th1}
Let $K$ be an algebraically closed field of characteristic $p$ and let $d\in \mathbb{N}$. Let $\Phi_1, \Phi_{2}: K^d \longrightarrow K^{d}$ be affine maps (that is there exist a $d\times d$ matrix $A_i \in GL_d(K)$ and ${\bf x_i}\in K^d$ such that $\Phi_i({\bf x}) = A_i ({\bf x}) + {\bf x_i}$ for $i = 1, 2)$. Let ${\bf a}_i \in K^d$ be not $\Phi_i$-preperiodic for $i = 1, 2$. If neither $A_1$ nor $A_2$ have an eigenvalue which is a root of unity, then the set 
\begin{equation}\label{eq3}
\left \{(n_1, n_2) \in \Z^{2} \mid \Phi_1^{n_1}({\bf a}_1) =  \Phi_{2}^{n_{2}}({\bf a}_{2})\right\}
\end{equation}
is $p$-normal in $\mathbb{Z}^{2}$ of order at most $d$.
\end{theorem}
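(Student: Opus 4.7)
\medskip

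\textbf{Proof plan.} The plan is to linearize the problem by passing to one higher dimension, then translate the equality $\Phi_1^{n_1}({\bf a}_1) = \Phi_2^{n_2}({\bf a}_2)$ into a system of polynomial-exponential equations in the two unknown integers $n_1, n_2$, and finally apply the positive-characteristic structure theorems for zero sets of such equations to exhibit the solution set as a $p$-normal subset of $\mathbb{Z}^2$ of order at most $d$.

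First I would use the standard embedding $\tilde A_i := \left(\begin{smallmatrix} A_i & {\bf x}_i \\ 0 & 1\end{smallmatrix}\right) \in GL_{d+1}(K)$, so that $\tilde A_i^{n_i} \left(\begin{smallmatrix}{\bf a}_i\\ 1\end{smallmatrix}\right) = \left(\begin{smallmatrix}\Phi_i^{n_i}({\bf a}_i)\\ 1\end{smallmatrix}\right)$ for every $n_i\in\mathbb{Z}$. The equality in \eqref{eq3} then amounts to $d$ equations of the form $f_j(n_1,n_2) = 0$, where each $f_j$ is obtained by taking the $j$-th coordinate of $\tilde A_1^{n_1}\left(\begin{smallmatrix}{\bf a}_1\\ 1\end{smallmatrix}\right) - \tilde A_2^{n_2}\left(\begin{smallmatrix}{\bf a}_2\\ 1\end{smallmatrix}\right)$. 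Passing to the algebraic closure and using the Jordan normal form of $\tilde A_1$ and $\tilde A_2$, each $f_j$ becomes a polynomial-exponential expression
\[
f_j(n_1,n_2) \;=\; \sum_{k} P_{j,k}(n_1)\,\alpha_k^{n_1} \;-\; \sum_{\ell} Q_{j,\ell}(n_2)\,\beta_\ell^{n_2},
\]
where $\alpha_k$ (resp. $\beta_\ell$) run through the (nonzero) eigenvalues of $\tilde A_1$ (resp. $\tilde A_2$) and $P_{j,k}, Q_{j,\ell}\in\overline K[n]$ come from the block structure; in particular the total number of distinct eigenvalues appearing is at most $d+1$, and their contribution to the ``size'' of the resulting system is bounded by $d$.

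Next I would analyze the simultaneous zero set of $f_1,\dots,f_d$ in $\mathbb{Z}^2$ via the theory of linear equations in two variables over finitely generated multiplicative groups in characteristic $p$ (Derksen's theorem \cite{der07, der15} and its polynomial-exponential extensions invoked in \cite{g19, c19}). The key input is that for a single equation of the form $\sum_i c_i u_i^{n_1} v_i^{n_2} = 0$ with $c_i\in\overline K^*$ and $(u_i,v_i)\in(\overline K^*)^2$, the solution set in $\mathbb{Z}^2$ is $p$-normal, and the order is controlled by the number of terms (which here is bounded in terms of $d$). Taking the intersection of the $d$ individual $p$-normal sets, and using that a finite intersection of $p$-normal sets is again $p$-normal with an order bounded by the sum of the orders (with cosets of subgroups of $\mathbb{Z}^2$ absorbed into the linear part), I obtain the desired $p$-normality of \eqref{eq3} with order at most $d$.

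Finally, the hypothesis that ${\bf a}_i$ is not $\Phi_i$-preperiodic rules out the degenerate situation in which one of the linearized equations $f_j(n_1,n_2)\equiv 0$ collapses to an identity in one of the variables, which would otherwise force the solution set to contain a full line of the form $\{n_1\}\times\mathbb{Z}$ or $\mathbb{Z}\times\{n_2\}$ and would inflate the order. The main obstacle I anticipate is not the linearization, which is routine, but the bookkeeping needed to pass from the $p$-normal structure of the solution sets of the individual polynomial-exponential equations in positive characteristic to a sharp bound on the order of the intersection; in particular one has to show that the Jordan block contributions, which produce polynomial factors $P_{j,k}(n_1), Q_{j,\ell}(n_2)$, do not increase the order beyond $d$, using the fact that the total algebraic multiplicity is controlled by $d+1$ and that the trivial eigenvalue $1$ coming from the $(d+1)$-st coordinate contributes only a singleton/coset.
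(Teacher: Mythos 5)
Your overall strategy --- linearize the affine maps, pass to Jordan normal form, translate the equality into coordinate-wise equations over a finitely generated multiplicative group, apply positive-characteristic structure theorems, and intersect --- coincides with the paper's, modulo a different linearization (you use the $(d+1)\times(d+1)$ block embedding $\tilde A_i$; the paper conjugates each $\Phi_i$ by a translation to a fixed point of $\Phi_i$ and works with $A_i$ itself). Either linearization is serviceable.

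The genuine gap lies in your handling of the polynomial coefficients $P_{j,k}(n_1)$, $Q_{j,\ell}(n_2)$ produced by the superdiagonal of a Jordan block, namely the binomial coefficients $\binom{n}{k}$. You write them into the system, but then your stated key input is a structure theorem for purely exponential equations $\sum_i c_i u_i^{n_1} v_i^{n_2}=0$ with \emph{constant} coefficients $c_i\in\overline K^*$, and you never say how to eliminate the polynomial factors. As written, the Derksen--Masser results you appeal to concern linear equations over multiplicative groups, not polynomial-exponential ones, so your reduction lands outside their scope. The paper's essential observation, specific to characteristic $p$ and missing from your argument, is that each $\binom{n}{k}$ occurring here depends only on the residue of $n$ modulo a fixed power $Q$ of $p$; one therefore partitions $\Z^2$ into cosets of $Q\Z^2$, on each of which the $P_{j,k}$, $Q_{j,\ell}$ become literal constants, and only then does one have a constant-coefficient exponential system to which Propositions~\ref{prop1} and~\ref{prop2} apply. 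This step is not a technicality: in characteristic zero these polynomial factors are precisely what changes the shape of the answer and forces the Schlickewei--Schmidt machinery in the proof of Theorem~\ref{th2}. A secondary issue is your closing claim that a finite intersection of $p$-normal sets in $\Z^2$ is $p$-normal with order bounded by the \emph{sum} of the orders: even granting it, this would only give an order bound on the scale of $d^2$, not $d$, and it is not established. The paper instead isolates Proposition~\ref{prop6} (intersections of elementary $p$-nested sets, via Laurent's theorem) and uses that each single coordinate equation already yields elementary $p$-nested sets of order at most $d$, so that the intersection does not inflate the order.
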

 
\begin{remark}
In Theorem \ref{th1},  the eigenvalues of the matrices $A_1$ and $A_2$ do not belong to $\overline{\mathbb{F}}_p^*$.
\end{remark}
 To state the second main result, we need the following notation. An one-parameter linear family in $\Z^2$ is of the form $(x(t), y(t)) = (at + c,  bt + d), \; t\in \Z$ for some integers $a, b \in \Z\setminus\{0\}$ and  $c, d \in \Z$.  We get the one-parameter linear  family in $\N_0^2$ by intersecting $\N_0^2$ with one-parameter linear family in $\Z^2$ and removing some singletons.

\begin{theorem}\label{th2}
Let $K$ be an algebraically closed field of characteristic $p$ and $d\in \mathbb{N}$. Let $\Phi_1,  \Phi_{2}: \mathbb{G}_m^d \longrightarrow \mathbb{G}_m^d$ be regular self-maps and ${\bf a}_1,  {\bf a}_2\in \mathbb{G}_m^d(K)$. Let $\Phi_1^0$ and $\Phi_2^0$ be group endomorphisms of $\mathbb{G}_m^d$ and  ${\bf y}, {\bf z} \in \mathbb{G}_m^d(K)$ such that $\Phi_1({\bf x}) = \Phi_1^{0}({\bf x}) + {\bf y}$ and $\Phi_2({\bf x}) = \Phi_2^{0}({\bf x}) + {\bf z}$. Let $\lambda_1, \ldots, \lambda_r$ and $\delta_1, \ldots, \delta_{s}$ be the roots of the minimal polynomial of $ \Phi_1^{0}$ and $\Phi_2^{0}$ respectively, such that none of $\lambda_i$'s $(1\leq i\leq r)$ and $\delta_j$'s $(1\leq j\leq s)$ are roots of unity. Further, assume that $|\lambda_1|>\max\{1, |\lambda_2|, \ldots, |\lambda_r|\}$ and $|\delta_1|>\max\{1, |\delta_2|, \ldots, |\delta_s|\}$. Then the set 
\begin{equation}\label{eq4a}
\left \{(n_1, n_{2}) \in \N_0^{2} \mid \Phi_1^{n_1}({\bf a}_1) =  \Phi_{2}^{n_{2}}({\bf a}_{2})\right\}
\end{equation}
is a finite union of singletons and one-parameter linear families.
\end{theorem}

\section{Proof of Theorem \ref{th1}}\label{sec2}
\subsection{Auxiliary results}
  Let $K$ be a field of positive characteristic $p$ and let $G$ be a finitely generated
subgroup of the multiplicative group $K^{*}$. For any subgroup $G$ of $K^{*}$ and a positive integer $n$ it makes sense to write $\mathbb{P}_n(G)$ for the set of points in projective space defined over $G$.  For $n\geq 2$, let $V$ be a linear variety in $\mathbb{P}_n$ defined over $K$. We write $V(G) = V \cap \mathbb{P}_n(G)$ for the set of points defined over $G$.

We will need the radical $G = \sqrt{G}$. For us this remains in $K$; thus it is the group of $\gamma$ in $K$ for which there exists a positive integer $s$ such that $\gamma^s$ lies in $G$. We denote by $\varphi = \varphi_q$ the Frobenius with $\varphi(x) =x^q$, where $q= p^e$ for some positive integer $e$. For points $\pi_0, \pi_1, \cdots, \pi_h$, we define the set
\begin{equation}
(\pi_0, \pi_1, \cdots, \pi_h) := (\pi_0, \pi_1, \cdots, \pi_h)_q = \pi_0 \bigcup_{e_1 =0}^{\infty}\cdots \bigcup_{e_h =0}^{\infty} (\varphi^{e_1}\pi_1)\cdots (\varphi^{e_h}\pi_h)
\end{equation}
with the interpretation $\pi_0$ itself if $h=0$. 

\begin{prop}[ \cite{der12}, Theorem 2]\label{prop1}
Let $K$ be a field of positive characteristic $p$, let $V$ be an arbitrary linear variety defined over $K$, and suppose that $\sqrt{G}$ in $K$ is finitely generated. Then there is a power $q$ of $p$ such that $V(G)$ is an effectively computable union of sets $(\pi_0, \pi_1, \cdots, \pi_h)_q H(G)$ with points $\pi_0, \pi_1, \cdots, \pi_h\; (0\leq h \leq n-1)$ defined over $\sqrt{G}$ and subgroups $H$.
\end{prop}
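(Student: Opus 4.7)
The plan is to argue by induction on the ambient dimension $n$ (and secondarily on the codimension of $V$ in $\mathbb{P}_n$), reducing to the analysis of solutions $(y_1,\ldots,y_n)\in G^n$ of a single linear equation
\[
a_0 + a_1 y_1 + \cdots + a_n y_n = 0, \qquad a_i \in K.
\]
A projective point of $V(G)$ corresponds, after passing to an affine chart $x_j\neq 0$ and dividing through by $x_j$, to such a tuple satisfying all the defining linear forms of $V$; so it suffices to describe the $G$-solutions of each equation and then intersect the resulting descriptions. The base cases ($V$ a point, or $V$ cut out by a vanishing equation) are immediate.

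For each linear equation one partitions the solutions into \emph{non-degenerate} ones, where no proper sub-sum of the terms $a_i y_i$ vanishes, and \emph{degenerate} ones. A degenerate solution satisfies a strictly shorter linear relation among a subset of the $a_i y_i$; projecting onto the non-vanishing part gives a linear equation in fewer variables to which the inductive hypothesis applies, while the vanishing sub-sum is itself a shorter equation of the same shape. Combining the two decompositions produces the required form for degenerate solutions, with the groups $H$ absorbing the consistency constraints between the two pieces.

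The heart of the proof is the non-degenerate case. Here one exploits the Frobenius $\varphi_q:x\mapsto x^q$: after enlarging $q$ once so that all coefficients $a_i$ lie in $\mathbb{F}_q$, applying $\varphi_q$ to any non-degenerate solution produces another, so the non-degenerate solution set is Frobenius-stable and generally infinite. The content of the proposition is that it nevertheless decomposes as finitely many nested Frobenius orbits $(\pi_0,\pi_1,\ldots,\pi_h)_q$ with the $\pi_i$ defined over $\sqrt{G}$, translated by $H(G)$ for a subgroup $H$ of solutions. The production of the base tuples $\pi_i$ relies on the positive-characteristic analog of the $S$-unit equation theorem --- the function-field versions of Brownawell--Masser and Voloch --- applied after descent to a finitely generated subfield over which $\sqrt{G}$ is defined; this gives a finite list of Frobenius-equivalence classes of non-degenerate solutions, whose representatives become the $\pi_i$.

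The main obstacle is threefold: (i) the lack of a Schmidt-type subspace theorem in characteristic $p$ means one controls only non-degenerate solutions, so the degenerate/non-degenerate split must be sharp and compatible with the induction; (ii) the Frobenius produces genuinely infinite families, so the output must be a structural description of nested orbits rather than a finite bound on solutions; and (iii) the effectivity statement demands that the decomposition --- including the choice of a single $q$ working at every stratum, the generators of $\sqrt{G}$, and the representatives $\pi_i$ --- be extracted algorithmically from the function-field bounds. The most delicate bookkeeping is ensuring that one uniform $q$ can be chosen for the entire induction and that the nested-Frobenius description is closed under intersection with the remaining linear forms cutting out $V$.
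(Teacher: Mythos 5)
The paper does not prove Proposition~\ref{prop1}: it is quoted verbatim as a result of Derksen--Masser~\cite{der12} and used as a black box throughout Section~\ref{sec2}. There is therefore no internal argument against which your outline can be checked; what follows is an assessment of the sketch on its own terms.

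As a free-standing outline of the Derksen--Masser argument, the broad architecture you describe --- reduction to a single linear equation by passing to affine charts, the split into degenerate and non-degenerate solutions, the positive-characteristic function-field $S$-unit bounds, and the Frobenius orbits as the source of the infinite families --- is consistent with how that theorem is actually proved. But there is a concrete gap at what you identify as the heart of the matter. The claim that one can ``enlarge $q$ once so that all coefficients $a_i$ lie in $\mathbb{F}_q$'' is false in general: $V$ is only assumed to be defined over $K$, and $K$ can contain elements transcendental over $\mathbb{F}_p$ (for instance $t$ in $\mathbb{F}_p(t)$), so no power $q$ of $p$ fixes those coefficients. Applying $\varphi_q$ to a solution of $\sum a_i y_i = 0$ then produces a solution of the \emph{different} equation $\sum a_i^q x_i = 0$, and the asserted Frobenius-stability of the non-degenerate locus evaporates. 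The correct route first reduces to the case where the coefficients can be absorbed into the multiplicative data --- e.g.\ replacing $G$ by the (still finitely generated) group generated by $G$ together with ratios of the coefficients so that the defining equation becomes a bona fide unit equation $z_1 + \cdots + z_n = 0$ with $z_i \in \sqrt{G}$; only then is $\varphi_q$ an honest endomorphism of the solution set. A secondary, less serious, gap is the closing assertion that the nested-Frobenius description is ``closed under intersection with the remaining linear forms'': that stability is precisely the content of a nontrivial lemma in Derksen--Masser (in this paper it resurfaces as Proposition~\ref{prop6}, via Laurent's theorem), and it deserves more than a sentence if you intend the outline to be load-bearing.
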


We also need nested sets in abelian groups. Let $\mathcal{A}$ be a finitely generated abelian group. For $k\geq1$, let $C_0, C_1, \ldots, C_k$ in $\mathcal{A}$, we define 
\[U_q(C_0; C_1, \ldots, C_k) = \{C_0 + C_1 q^{f_1}+ \cdots C_kq^{f_k} \mid f_1, \ldots, f_k \in \mathbb{N}_0\}\] in $\mathcal{A}$. We call it an {\em elementary integral $p$-nested set} of order at most $k$.

\begin{prop}[ \cite{der15}, p.119]\label{prop2}
Let $B_1, \ldots, B_d$ in $\mathcal{A}$, let $\mathcal{B}$ be a subgroup of $\mathcal{A}$ and denote by $R$ the subgroup of all $(k_1, \ldots, k_d) \in \mathbb{Z}^d$ such that $k_1B_1 + \cdots + k_dB_d$ lies in $\mathcal{B}$. Let $U$ be an elementary integral $p$-nested set of order at most $k$ in $\mathcal{A}$. Then the set of all $(k_1, \ldots, k_d)$ in $\mathbb{Z}^d$ such that $k_1B_1 + \cdots + k_dB_d$ lies in $\mathcal{B}+U$ is either empty or $R+ S$, where $S$ is a finite union of singletons and elementary $p$-nested sets of order at most $k$ in  $\mathbb{Z}^d$.
\end{prop}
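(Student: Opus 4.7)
The plan is to pass to the quotient $\mathcal{A}/\mathcal{B}$, so that the problem reduces to lifting elements of $U$, read modulo $\mathcal{B}$, through the homomorphism $\pi:\mathbb{Z}^d\to\mathcal{A}/\mathcal{B}$ sending $(k_1,\ldots,k_d)$ to $\sum_i k_i\bar B_i$. Since $R=\ker\pi$, the set in question, if non-empty, equals $\pi^{-1}(\bar U\cap M)$ where $M=\pi(\mathbb{Z}^d)$ and $\bar U$ is the image of $U$. Writing this preimage as $R+S$ reduces the task to exhibiting a set of coset representatives $S\subset\mathbb{Z}^d$ of the prescribed shape.

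The first step is to describe the set $\mathcal{F}\subseteq\mathbb{N}_0^k$ of exponent tuples for which $\bar C_0+\sum_j q^{f_j}\bar C_j$ actually lies in $M$. Passing to the finitely generated abelian quotient $N:=(\mathcal{A}/\mathcal{B})/M$ with images $\widetilde{C_j}$, the condition becomes $\widetilde{C_0}+\sum_j q^{f_j}\widetilde{C_j}=0$ in $N$. Using the structure theorem to split $N$ into a free part $F$ and a torsion part $T'$, the torsion projection yields periodicity in each $f_j$ modulo some integer $\tau_j$, while the free projection is a linear relation among the integer powers $q^{f_j}$ with coefficient vectors $\widetilde{C_j}^F\in F$. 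Because $q\geq 2$, such a relation among distinct integer powers of $q$ is very restrictive: by grouping indices $j$ whose $\widetilde{C_j}^F$'s admit a joint cancellation pattern and isolating those whose $f_j$ is forced to take only finitely many values, $\mathcal{F}$ decomposes as a finite union of families, each parametrized by at most $k$ free variables in $\mathbb{N}_0$ with each $f_j$ an affine function of those variables.

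To lift each family back to $\mathbb{Z}^d$, use relations $m_j\bar C_j\in\langle\bar B_1,\ldots,\bar B_d\rangle$ (which exist on the families where $\widetilde{C_j}$ is torsion modulo the appropriate subgroup) to write $\bar C_j=\tfrac{1}{m_j}\sum_i h_{ij}\bar B_i$ in $\mathbb{Q}\otimes(\mathcal{A}/\mathcal{B})$. Substituting into $\sum_i k_i\bar B_i=\bar C_0+\sum_j q^{f_j}\bar C_j$ produces a candidate vector $k\in\mathbb{Q}^d$ of the form $c_0+\sum_l c_l q^{g_l}$ whose fractional parts are killed precisely because $(f_1,\ldots,f_k)$ was selected from an admissible family, so that $k$ can be corrected to an integer vector modulo $R$. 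Repeating this lift family by family, and replacing $q$ by $q^\tau$ for a common multiple $\tau$ of the periodicities when needed (still a power of $p$), produces a finite union of singletons and elementary $p$-nested sets in $\mathbb{Z}^d$, each of order at most $k$, which supplies the desired $S$.

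The main obstacle is the free-part analysis: showing that solutions of the integer-vector equation $\widetilde{C_0}^F+\sum_j q^{f_j}\widetilde{C_j}^F=0$ decompose into finitely many families with at most $k$ free exponent parameters, and then verifying that the rational coefficients produced by the $\tfrac{1}{m_j}$-factors satisfy the integrality requirements $(q-1)c_l\in\mathbb{Z}^d$ and $\sum c_l\in\mathbb{Z}^d$ from the definition of an elementary $p$-nested set. Keeping the order of $S$ bounded by $k$, rather than letting it grow through auxiliary free parameters introduced in the decomposition of $\mathcal{F}$, is the delicate book-keeping step at the heart of the argument.
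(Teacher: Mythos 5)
The paper does not prove Proposition \ref{prop2}; it is quoted directly from Derksen--Masser \cite{der15}, so there is no in-paper argument to compare against. Judged on its own merits, your high-level plan is the natural one and is almost certainly the right skeleton: reduce modulo $\mathcal{B}$, identify $R$ as the kernel of $\pi:\mathbb{Z}^d\to\mathcal{A}/\mathcal{B}$, determine the admissible exponent tuples by passing to the cokernel $N=(\mathcal{A}/\mathcal{B})/M$, split $N$ into free and torsion parts, and then lift a section back to $\mathbb{Z}^d$. The structural shape of the answer ($R+S$ with $S$ a finite union of singletons and elementary $p$-nested sets) falls out of exactly this scheme.

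There are, however, two places where what you write is an assertion rather than a proof. First, the claim that the free-part equation $\widetilde{C_0}^F+\sum_j q^{f_j}\widetilde{C_j}^F=0$ has solution set equal to a finite union of families in which each $f_j$ is an affine function of at most $k$ free parameters is the real content, and you give no argument for it. What is needed is a valuation argument: since $q$ is a power of $p$, the $p$-adic sizes of the terms $q^{f_j}\widetilde{C_j}^F$ force, coordinate by coordinate in $F\cong\mathbb{Z}^m$, the minimal valuations to collide, which pins certain differences $f_j-f_{j'}$ to fixed constants and bounds the remaining exponents; iterating this yields the affine decomposition. Saying ``such a relation is very restrictive'' and ``grouping indices with a joint cancellation pattern'' names the conclusion without proving it.

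Second, the lifting step is imprecise in a way that matters, because it is exactly here that the defining integrality constraints $(q-1)\mathbf{c_i}\in\mathbb{Z}^d$ and $\mathbf{c_0}+\cdots+\mathbf{c_k}\in\mathbb{Z}^d$ of an elementary $p$-nested set have to be produced. On a given admissible family, after regrouping write the element of $\bar U\cap M$ as $\bar C_0'+\sum_l q^{g_l}D_l$ with free parameters $g_l\in\mathbb{N}_0$. Taking the difference of the values at $g_l$ and $g_l+1$ shows $(q-1)D_l\in M$, and setting $g_l=0$ for all $l$ shows $\bar C_0'+\sum_l D_l\in M$. Lift $(q-1)D_l$ to $E_l'\in\mathbb{Z}^d$, lift $\bar C_0'+\sum_l D_l$ to $E'\in\mathbb{Z}^d$, and set $\mathbf{c_l}=\tfrac{1}{q-1}E_l'$, $\mathbf{c_0}=E'-\sum_l\mathbf{c_l}$. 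Then $(q-1)\mathbf{c_l}\in\mathbb{Z}^d$, $\mathbf{c_0}+\sum_l\mathbf{c_l}=E'\in\mathbb{Z}^d$, and the identity
\[
\mathbf{c_0}+\sum_l q^{g_l}\mathbf{c_l}=E'+\sum_l\frac{q^{g_l}-1}{q-1}E_l'
\]
exhibits each value as an integer vector with the correct image under $\pi$. Your version, with ad hoc torsion multipliers $m_j$ and a vague appeal to ``fractional parts being killed,'' neither explains why the relevant multiplier is specifically $q-1$ nor verifies the two integrality conditions in the definition. Fixing these two points would turn the outline into a proof.
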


By an application of Laurent's theorem \cite{la84}, we obtain the following result.

\begin{prop}\label{prop6}
The intersection of two elementary $p$-nested sets is a finite union of elementary $p$-nested sets.
\end{prop}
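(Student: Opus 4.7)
The plan is to translate the intersection condition into a single linear equation over the rank-one multiplicative group $\langle q\rangle\subset\mathbb{Q}^*$, invoke Laurent's theorem to decompose its solution set into finitely many cosets of subgroups, and finally repackage each coset as an elementary $p$-nested set.

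The first step is to write the intersection explicitly. A point $w\in\mathbb{Z}^m$ lies in $S_q(\mathbf{c}_0;\mathbf{c}_1,\ldots,\mathbf{c}_k)\cap S_q(\mathbf{d}_0;\mathbf{d}_1,\ldots,\mathbf{d}_\ell)$ iff there exist non-negative integers $(f_1,\ldots,f_k,g_1,\ldots,g_\ell)$ satisfying
\[
\sum_{i=1}^{k}\mathbf{c}_i\,q^{f_i}-\sum_{j=1}^{\ell}\mathbf{d}_j\,q^{g_j}=\mathbf{d}_0-\mathbf{c}_0,
\]
in which case $w$ equals the common value. Setting $u_i=q^{f_i}$ and $v_j=q^{g_j}$, and (if $\mathbf{d}_0\neq\mathbf{c}_0$) homogenizing with an extra variable $w_0=1=q^0\in\langle q\rangle$, the problem becomes to describe all $(u_i,v_j)$ in the finitely generated cyclic group $\langle q\rangle\subset\mathbb{Q}^*$ satisfying the resulting coordinate-wise system of $m$ linear equations.

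The second step is to apply Laurent's theorem \cite{la84}: for a linear equation over a finitely generated subgroup of the multiplicative group of a characteristic-zero field, the solution set is a finite union of cosets of subgroups. Applying this one coordinate at a time and taking the common refinement, the set of admissible exponent tuples is a finite union of cosets $(f_{0},g_{0})+H$ of subgroups $H\subset\mathbb{Z}^{k+\ell}$. Intersecting each coset with the positivity cone $\mathbb{N}_0^{k+\ell}$ and choosing a basis of $H$, the coset is parameterized as $f_i=\alpha_i+\sum_{s=1}^{r}\beta_{i,s}t_s$ for $(t_1,\ldots,t_r)\in\mathbb{N}_0^r$, possibly off a finite boundary exceptional set, where $r$ is the rank of $H$. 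The third step is to substitute these relations back into $\mathbf{c}_0+\sum_i\mathbf{c}_i q^{f_i}$. Because $\langle q\rangle$ has rank one, every vanishing subsum produced by Laurent's theorem forces a constant difference of exponents, so the tied terms collect into a single $q$-monomial with a new rational coefficient. The resulting expression reads $\mathbf{c}_0'+\sum_{s=1}^{r}\mathbf{c}_s'\,q^{t_s}$, which is an elementary $p$-nested set of order at most $r\le k$; the integrality conditions $(q-1)\mathbf{c}_s'\in\mathbb{Z}^m$ and $\sum_s\mathbf{c}_s'\in\mathbb{Z}^m$ inherit from the originals, and any boundary exceptions become finitely many singletons.

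The main obstacle I foresee is this last repackaging: one must check that the combinatorial partitioning dictated by Laurent's theorem, together with the positivity constraints from $\mathbb{N}_0^{k+\ell}$, truly collapses into the $p$-nested form with valid integrality rather than into some more general shape, and that low-rank Laurent cosets together with awkward truncations do not leak outside the framework. The rank-one nature of $\langle q\rangle$ is the key enabling feature—every non-trivial relation is of the form $f_i-f_j=c$—so after a clean change of variable each family lands inside the $p$-nested framework and the finitely many boundary points are absorbed as singletons, yielding the claimed finite decomposition of $S_1\cap S_2$.
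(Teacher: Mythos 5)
Your proposal is correct and takes essentially the same route as the paper: the paper offers no argument beyond citing Laurent's theorem, and your write-up is precisely the natural way to unpack that citation (translate the membership condition to a linear variety in $\mathbb{G}_m^{k+\ell}$ over the cyclic group $\langle q\rangle$, apply Laurent to obtain finitely many cosets of subgroups, observe that rank one forces every relation to be of the form $f_i-f_j=\text{const}$ so each coset is a product of diagonals, intersect with $\mathbb{N}_0^{k+\ell}$ and collapse the tied exponents into new coefficients). One small imprecision: the intermediate assertion that \emph{any} coset $(\mathbf{f}_0,\mathbf{g}_0)+H$ intersected with $\mathbb{N}_0^{k+\ell}$ is parameterized by $\mathbb{N}_0^{r}$ off a finite set is false for arbitrary $H$ (e.g.\ $H=\mathbb{Z}(1,1)+\mathbb{Z}(1,-1)$ in $\mathbb{Z}^2$); it is the diagonal shape of the $H$ arising here from the linear equations — exactly the fact you record at the end under ``every non-trivial relation is $f_i-f_j=c$'' — that makes the parameterization valid, and once that is invoked the finite boundary set is actually unnecessary. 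If you reorder the argument so that the diagonal structure of $H$ is established before the positivity truncation is performed, the proof is complete.
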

\subsection{Proof of Theorem \ref{th1}}

Let $\tilde{\bf a}_1$ be a fixed point of $\Phi_1$, that is, $A_1\tilde{\bf a}_1 + {\bf a}_1  = \tilde{\bf a}_1$. This is permissible since $A_1- I_d$ is invertible. Define $\psi({\bf a}) = {\bf a} + \tilde{\bf a}_1$, so that $\psi^{-1}\circ \Phi_1\circ \psi({\bf a} ) = A_1 ({\bf a} )$. From this one can deduce,  $\Phi_1^{n_1}({\bf a} + \tilde{\bf a}_1) = A_1^{n_1}({\bf a}) + \tilde{\bf a}_1$. Similarly, let  $\tilde{\bf a}_2$ be a fixed point of $\Phi_2$, then we have $\Phi_2^{n_2}({\bf a} + \tilde{\bf a}_2) = A_2^{n_2}({\bf a}) + \tilde{\bf a}_2$. Hence, we reduce the problem of studying the set of pairs $(n_1, n_2) \in \Z^2$ satisfying 
\begin{equation}\label{eq13}
A_1^{n_1}{\bf b_1} = A_2^{n_2}{\bf b_2} +  {\bf b_3}
\end{equation}
where ${\bf b_3}:= \tilde{\bf a}_2 - \tilde{\bf a}_1$ and ${\bf b_1}, {\bf b_2}$ are given vectors such that ${\bf b_1}$
 (respectively  ${\bf b_2}$) is not preperiodic under the map ${\bf x} \mapsto A_1{\bf x}$ (respectively ${\bf x} \mapsto A_2{\bf x}$).

Suppose that  $J_1$ and $J_2$ are Jordan normal form of matrix $A_1$ and $A_2$ respectively. Hence, we can write  $A_1 = C^{-1}J_1 C$ and $A_2 = D^{-1}J_2 D$, where $C, D \in GL_d(K)$. With these expressions of $A_1$ and $A_2$, equation \eqref{eq13} becomes
\begin{equation}\label{eq14}
J_1^{n_1}C{\bf b_1}= CD^{-1}J_2^{n_2}D{\bf b_2}+  C{\bf b_3}.
\end{equation}

Let $\lambda \in K, \ell\in \N$ and $n\in \N_0$. Let $J_{\lambda, \ell}$ be the Jordan matrix of size $\ell$ and eigenvalue $\lambda$ and we have the formula 
\begin{equation}\label{eq15}
J_{\lambda, \ell}^n = \begin{bmatrix}
\lambda^n & \binom{n}{1}\lambda^{n-1} & \binom{n}{2}\lambda^{n-2} &\cdots  & \binom{n}{\ell-1}\lambda^{n-\ell+1}\\
0 & \lambda^{n} & \binom{n}{1}\lambda^{n-1} &\cdots  & \binom{n}{\ell-2}\lambda^{n-\ell+2}\\
\vdots& \vdots& \vdots& \vdots & \vdots\\
\vdots& \vdots& \vdots& \vdots & \vdots\\
0& 0& 0&\cdots & \lambda^n
\end{bmatrix}.
\end{equation}

Since $\binom{n}{k}$ is a polynomial in $n$ of degree $k$, there is a power $Q$ of $p$ such that all $\binom{n}{k}$ in \eqref{eq15} depend only on the values of $n$ modulo $Q$. Thus for all $n$ in each fixed residue class $n_0+ Q\Z$ in $\Z$, we may write \eqref{eq15} as 
 
\begin{equation}\label{eq16}
J_{\lambda, \ell}^n = \begin{bmatrix}
\lambda^n & \gamma_{1,1}\lambda^{n} & \gamma_{1,2}\lambda^{n} &\cdots  & \gamma_{1,\ell-1}\lambda^{n}\\
0 & \lambda^{n} & \gamma_{2,1}\lambda^{n} &\cdots  & \gamma_{2, \ell-2}\lambda^{n}\\
\vdots& \vdots& \vdots& \vdots & \vdots\\
\vdots& \vdots& \vdots& \vdots & \vdots\\
0& 0& 0&\cdots & \lambda^n
\end{bmatrix}.
\end{equation}
    If ${\bf b} = (b_1, \ldots, b_{\ell})^{T}$, is a fixed column vector in $K^n$ $(^T$ denote the transpose) and $n\geq \ell$, we have
\begin{equation}\label{eq17}
J_{\lambda, \ell}^n{\bf b} = (\gamma_1\lambda^n, \ldots, \gamma_{\ell}\lambda^n)^{T}.
\end{equation}
Let $r$ be number of Jordan blocks in $J_1$, let $J_{\lambda_i, g_i}$ for $1\leq i\leq r, \lambda_i\in K, g_i\in \N$ and $\sum_{i}g_i=d$ be the Jordan blocks of $J_1$. Let $s$ be number of Jordan blocks in $J_2$, let $J_{\delta_i, h_i}$ for $1\leq i\leq s, \delta_i\in K, h_i\in \N$ and $\sum_{i}h_i=d$ be the Jordan blocks of $J_2$.

Assume  that $n_1$ (resp. $n_2$) is in fixed residue class $n_1'+ Q\Z$ (resp. $n_2'+ Q\Z$) in $\Z$. This is possible as for any $(n_1', n_2')\in \Z^2$ and $p$-normal set $R+S$, the set $(n_1', n_2') + Q (R + S) = \tilde{R} + \tilde{S}$ for the subgroup $\tilde{R} = QR$ and the elementary $p$-nested set $\tilde{S} = (n_1', n_2') + QS$. Thus, by \eqref{eq17}, 
\begin{equation}\label{eq18}
J_1^{n_1}C{\bf b}_1 = (b_{1,1}^{(1)}\lambda_1^{n_1}, \ldots b_{1,g_1}^{(1)}\lambda_1^{n_1}, b_{2,1}^{(1)}\lambda_2^{n_1}, \ldots b_{2,g_2}^{(1)}\lambda_2^{n_1},\cdots, b_{r,1}^{(1)}\lambda_r^{n_1}, \ldots b_{r,g_r}^{(1)}\lambda_r^{n_1})^{T},
\end{equation}
and
\begin{equation}\label{eq19}
J_2^{n_2}D{\bf b}_2 = (b_{1,1}^{(2)}\delta_1^{n_2}, \ldots b_{1,h_1}^{(2)}\delta_1^{n_2}, b_{2,1}^{(2)}\delta_2^{n_2}, \ldots b_{2,h_2}^{(2)}\delta_2^{n_2},\cdots, b_{s,1}^{(2)}\delta_s^{n_2}, \ldots b_{s,h_s}^{(2)}\delta_s^{n_2})^{T}.
\end{equation}

Write $C{\bf b}_3 = (b_{1}^{(3)},\ldots, b_d^{(3)})$ and let $k$-th row of the matrix $CD^{-1}$ is
\[(c_{1,1}^{k}, \ldots, c_{1, h_1}^{k}, c_{2, 1}^{k}, \ldots, c_{2,h_2}^{k}, \ldots, c_{s, 1}^{k}, \ldots, c_{s, h_s}^{k})^{T}.\]

For fix $i\in \{1, \ldots, r\}$ and $\ell\in \{1, \ldots, g_i\}$, our claim is that the set of $(n_1, n_2)\in \Z^2$ satisfying \eqref{eq14} is $R+S$, where $R$ is the subgroup of $\Z^2$ and $S$ is the finite union of singletons and elementary $p$-nested sets of order at most $d$ in $\Z^2$. If we assume this claim is true, then application of Proposition \ref{prop6} finishes the proof of of the theorem.

Now fix an $i\in \{1, \ldots, r\}$ and $\ell\in \{1, \ldots, g_i\}$. From \eqref{eq14}, \eqref{eq18} and \eqref{eq19}, we have
\begin{equation}\label{eq20}
b_{i,\ell}^{(1)} \lambda_i^{n_1} = \sum_{j=1}^{s}\left(\sum_{k=1}^{h_j}c_{j,k}^{g_{i-1}'+\ell}b_{j,k}^{(2)}\right)\delta_{j}^{n_2} + b_{g_{i-1}'+\ell}^{(3)}, 
\end{equation}
with $g_{i}' =\sum_{j=1}^{i}g_j$. We rewrite \eqref{eq20} as 
\begin{equation}\label{eq21}
b_{i,\ell}^{(1)} \lambda_i^{n_1} = \sum_{j=1}^{s}d_{i,j,\ell}\delta_{j}^{n_2} + b_{g_{i-1}'+\ell}^{(3)}
\end{equation}
where $d_{i,j,\ell}:= \sum_{k=1}^{h_j}c_{j,k}^{g_{i-1}'+\ell}b_{j,k}^{(2)}$.

We want to apply Proposition \ref{prop1} to the linear variety $V$ defined by the corresponding equation
\begin{equation}\label{eq22}
 \sum_{j=1}^{s}d_{i,j,\ell}X_j + (-b_{i,\ell}^{(1)}) X_{s+1}+ b_{g_{i-1}'+\ell}^{(3)}X_{s+2} = 0.
\end{equation}
Here we are in projective space $\mathbb{P}_{s+1}$ and also we work inside the field generated by $d_{i,j,\ell}, b_{i,\ell}^{(1)}, b_{g_{i-1}'+\ell}^{(3)}, \lambda_i, \delta_j$ over $\mathbb{F}_p$ and $G$ as the radical of the group generated by $ \lambda_i, \delta_j$. This $G$ is also finitely generated. Notice that \eqref{eq21} gives a point  $\eta$ on $V(G)$.  We may identify the group $G^{s+1}$ with $\mathbb{P}_{s+1}(G)$ and define an isomorphism $\log$ from these to a finitely generated additive abelian group $\mathcal {A}$. Then we have 
\begin{equation}\label{eq23}
\log \eta = n_1 B_1 + n_2 B_2
\end{equation}
where $B_2 = \log (\delta_1, \delta_2, \cdots, \delta_s,1) = \log (\delta_1\cdots \delta_s)$  and $B_1 = \log (1, 1, \cdots, 1,\lambda_i)= \log (\lambda_i)$.

By Proposition \ref{prop1}, $V(G)$ is a finite union of sets \[Z:= (\pi_0, \pi_1, \ldots, \pi_h)_q H(G)\quad (0\leq h \leq s)\] with points $\pi_0, \pi_1, \ldots, \pi_h$ defined over $G$, 
\begin{equation*}
 (\pi_0, \pi_1, \cdots, \pi_h)_q = \pi_0 \bigcup_{e_1 =0}^{\infty}\cdots \bigcup_{e_h =0}^{\infty} (\varphi^{e_1}\pi_1)\cdots (\varphi^{e_h}\pi_h)
\end{equation*}
and linear subgroups $H$ defined by the equation $X_i=X_j$.  Now a point $\pi_Z \in Z$ has 
\begin{equation}\label{eq24}
\log \pi_Z = C_0+ C_1 q^{f_1}+ \cdots+C_hq^{f_h} + B
\end{equation}
with $C_k = \log \pi_k$ for $k=0, 1, \ldots, h$ and $B =\log \sigma$ in $H(G)$. Let $\mathcal{B}= \log H(G)$. Thus, the set of all such $\pi_m$ such that $\log \pi_m$ forms a sum $\mathcal{B} + U$, where $U$ is an elementary integral $p$-nested set of order at most $h \leq s\leq d$ in $\mathcal{A}$. Now from \eqref{eq23} and \eqref{eq24} and Proposition \ref{prop2}, one can observe that for each $Z$, the set $(n_1, n_2) \in \Z^2$ is $R_Z+ S_Z$, with $R_Z$ is group of all $(n_1, n_2)$ in $\Z^2$ with $n_1B_1 + n_2B_2$ in $\mathcal{B}$ and $S_Z$ is a finite union of singletons and elementary $p$-nested sets in $\Z^2$ of order at most $d$. This completes the proof of the theorem. \qed

\section{Proof of Theorem \ref{th2}}

\subsection{Linear recurrence sequences}\label{sec3.1}
 We define linear recurrence sequences which are essential in our proof.

A linear recurrence sequence of order $k$ is a sequence $(U_{n})_{n \geq 0}$ satisfying a relation
\begin{equation}\label{eq4}
U_{n} = b_1U_{n-1} + \dots +b_kU_{n-k}
\end{equation}
with $b_1,\dots, b_k \in \C$ with $b_k\neq 0$ and $U_0,\dots,U_{k-1}$ are integers not all zero. The characteristic polynomial of $U_n$ is defined by
\begin{equation}\label{eq5}
f(x):= x^k - b_1x^{k-1}-\dots-b_k = \prod_{i =1}^{t}(x - \alpha_i)^{m_i}\in \C[X]
\end{equation}
where $\alpha_1,\dots,\alpha_t$ are distinct and $m_1,\dots, m_t$ are positive integers. Then as it is well-known (see for example \cite[Theorem C1]{st}) we have a representation of the form
\begin{equation}\label{eq6}
U_n=\sum_{i=0}^t P_i(n)\alpha_{i}^n \ \ \ \text{for all}\ n\geq 0,
\end{equation}
with $P_i(x)$ is a polynomial of degree $m_i-1$ $(i=1,\dots,t)$. If some root of the characteristic polynomial is a root of unity, let this root be $\alpha_0$, and $\alpha_1, \ldots, \alpha_t$ the other roots. If no root of the characteristic polynomial is a root of unity, let these roots be $\alpha_1, \ldots, \alpha_t$ and set $\alpha_0 = 1, P_0 = 0$. We call the sequence $\{U_n\}$ {\it simple} if $t=k$. The sequence $(U_{n})_{n \geq 0}$ is called {\it degenerate} if there are integers $i,j$ with $1\leq i< j\leq t$ such that $\alpha_i/\alpha_j$ is a root of unity; otherwise it is called {\it non-degenerate}. Every degenerate recurrence sequence $(U_n)_{n\geq 0}$ can split into finitely many subsequences $(U_{nM+ \ell})$ for given $M \in \mathbb{N}$ and for $\ell = 0, 1, \ldots, M-1$ such that each subsequence is either trivial or non-degenerate recurrence sequence. If some $\alpha_i/\alpha_j$ is a root of unity, say of order $M$, then for each $\ell = 0, \ldots, M-1$ we have 
\begin{equation}\label{eq7g}
U_{nM+\ell} = \sum_{i=0}^t P_i(nM+\ell)\alpha_{i}^{\ell}\left(\alpha_{i}^M\right)^{n}
\end{equation}
and we can rewrite \eqref{eq7g} for $U_{nM+\ell}$ by collecting powers of $\alpha_i^M$ which are equal and thus get a non-degenerate linear recurrence sequence. For more details on linear recurrence sequences we refer the reader to (\cite{sch, st}).

Let $V_m$ be another linear recurrence, written as
\begin{equation}\label{eq7a}
V_m = \sum_{i=0}^{t'} Q_i(m)\beta_i^m
\end{equation} 
where $\beta_0$ is a root of unity, $\beta_i/\beta_j$ for $i\neq j$ is not a root of unity and  the $Q_i$ are polynomials of degree $m_i-1$ for $0\leq i\leq t'$ and  $Q_i \neq 0$ for $1\leq i\leq t'$. 

The following proposition is essentially a reformulation of Laurent \cite{la87, la89}, and will be used in our proof.

\begin{prop}[\cite{ss}, Proposition 1, 2 and 3]\label{prop4}
Suppose $(U)_{n\geq 0}$ and $(V)_{m\geq 0}$ are non-degenerate linear recurrence sequences given by \eqref{eq6} and \eqref{eq7a} and are not of the form $P(n)\alpha_0^n$ and $Q(m)\beta_0^n$ respectively, where $\alpha_0, \beta_0$ are roots of unity. Further, assume that $|\alpha_1|>\max \{1, |\alpha_2|, \ldots, |\alpha|_t\}$ and $|\beta_1|>\max \{1, |\beta_2|, \ldots, |\beta_{t'}|\}$. Then the set of pairs of integers $(n, m)$ for which 
\begin{equation}\label{eq7f}
U_n = V_m
\end{equation}
lie in the union of a finite set with possibly a one-parameter linear family of solutions
\begin{equation}\label{eq11}
x(t) = at + a', \quad y(t) = bt + b', \quad t\in \Z,
\end{equation}
with certain $a', b'\in \Z, \; a, b \in \Z \setminus\{0\}$.
\end{prop}


\begin{lemma}\label{prop5}
The intersection of two one-parameter linear families is again a family of linear type.
\end{lemma}
\begin{proof}
Let $(a_1t+c_1, b_1t+ d_1)$ and $(a_2t+c_2, b_2t+ d_2)$ with $t\in \Z$ be two linear families. Here $(a_i, b_i)$ for $i=1, 2$ be the minimal with $\alpha_i^{a_i} = \beta_i^{b_i}$. Let $(a, b)$ be the least common multiple of $(a_i, b_i)$ with $\alpha_i^{a} = \beta_i^{b}$. If the intersection of the two linear families in non-empty and if $(n, m)$ lies in this intersection, then the intersection consists of the pairs $(at +n, bt+m), t\in \Z$. This completes the proof.
\end{proof}

\subsection{Proof of Theorem \ref{th2}}
Since $\Phi_1$ is a regular self map on  $\mathbb{G}_m^d$, then there exists a group endomorphism $\Phi_1^{0}: \mathbb{G}_m^d \longrightarrow \mathbb{G}_m^d$ and there exists ${\bf y}\in \mathbb{G}_m^d(K)$ such that for any ${\bf x}\in \mathbb{G}_m^d(K)$, we have $\Phi_1({\bf x}) = \Phi_1^{0}({\bf x}) + {\bf y}$. Further, $\Phi_1^{0}$ acts on $\mathbb{G}_m^d$ as follows:
\begin{equation*}
\Phi_1^{0}(x_1, \ldots, x_d) = \left(\prod_{k=1}^{d} x_k^{a_{1, k}}, \ldots , \prod_{k=1}^{d} x_k^{a_{d, k}}\right)
\end{equation*}
 for some integer $a_{i,j}$, and therefore $\Phi_1^0$ is integral over $\Z$, i.e., there exist integers $c_0, \ldots, c_{d-1}$ such that
 \begin{equation}\label{eq25}
  (\Phi_1^{0})^d({\bf x})+ c_{d-1} (\Phi_1^{0})^{d-1}({\bf x})+\cdots +c_1  \Phi_1^{0}({\bf x}) + c_0 {\bf x} = 0,
 \end{equation}
 for each ${\bf x}\in \mathbb{G}_m^d(K)$. Then as shown in \cite[Claim 4.2]{g19}, there exist linear recurrence sequences $(U_{i,n})_{n \in \N_0} \subset \Z$ for $1\leq i \leq d$ and $(V_{i,n})_{n \in \N_0} \subset \Z$ for $0\leq i \leq d-1$ such that for each $n_1\in \N_0$, we have 
 \begin{equation}\label{eq26}
 \Phi_1^{n_1}({\bf a}) = \sum_{i=1}^{d}U_{i,n_1} \left(\sum_{j=0}^{i-1} (\Phi_1^0)^{j}({\bf y})\right) + \sum_{i=0}^{d-1} V_{i,n_1}(\Phi_1^0)^{i}({\bf a}).
 \end{equation}
 Similarly, for the regular self map $\Phi_2$ on  $\mathbb{G}_m^d$ and $n_2\in \N_0$, we have 
 \begin{equation}\label{eq27}
 \Phi_2^{n_2}({\bf b}) = \sum_{i=1}^{d}U_{i,n_2}' \left(\sum_{j=0}^{i-1} (\Phi_2^0)^{j}({\bf z})\right) + \sum_{i=0}^{d-1} V_{i,n_2}'(\Phi_2^0)^{i}({\bf b}),
 \end{equation}
 where $\Phi_2^{0}$ is a group endomorphism on $\mathbb{G}_m^d,\; (U_{i,n}')_{n_2 \in \N_0} \subset \Z$ for $1\leq i \leq d$ and $(V_{i,n}')_{n \in \N_0} \subset \Z$ for $0\leq i \leq d-1$ and ${\bf z} \in \mathbb{G}_m^d(K)$. For each  $i = 1, \ldots, d$, we use the following notation 
 \begin{equation}\label{eq28}
 P_i := \sum_{j=0}^{i-1} (\Phi_1^0)^{j}({\bf y}) \;\; \mbox{and}\;\;  P_i' := \sum_{j=0}^{i-1} (\Phi_2^0)^{j}({\bf z}).
 \end{equation}
 Now using \eqref{eq26}, \eqref{eq27} and \eqref{eq28}, we rewrite the equation $\Phi_1^{n_1}({\bf a}) =  \Phi_2^{n_2}({\bf b})$ as 
\begin{equation}\label{eq29}
\sum_{i=1}^{d}U_{i,n_1}P_i + \sum_{i=0}^{d-1} V_{i,n_1}(\Phi_1^0)^{i}({\bf a}) = \sum_{i=1}^{d}U_{i,n_2}' P_i'+ \sum_{i=0}^{d-1} V_{i,n_2}'(\Phi_2^0)^{i}({\bf b}).
 \end{equation}
 
 Let $\Gamma$ be a finitely generated abelian group containing 
 \[(\Phi_1^0)^{j}({\bf y}), (\Phi_2^0)^{j}({\bf z}), (\Phi_1^0)^{j}({\bf a})\quad \mbox{and} \quad(\Phi_2^0)^{j}({\bf b})\] for each $j = 0, \ldots, d-1$. Since $\Gamma$ is finitely generated abelian group, we know that $\Gamma$ is isomorphic to a direct sum of a finite subgroup $\Gamma_0$ with a subgroup $\Gamma_1$ which is isomorphic to $\Z^r$ for some $r\in \N_0$. Let $\{Q_1, \ldots, Q_r\}$ be a $\Z$-basis for $\Gamma_1$. Then we write each \[P_i = P_{i,0} + \sum_{k=1}^r e_{i,k}Q_k, \; P_{i,0}\in \Gamma_0, \;\mbox{and each}\;\; e_{i,k}\in \Z\] for $i = 1, \ldots, d$ and also write each  
 \[(\Phi_1^0)^{i}({\bf a}) = T_{i,0} + \sum_{k=1}^r f_{i,k}Q_k,\;T_{i,0}\in \Gamma_0 \;\mbox{and each}\;\; f_{i,k}\in \Z.\] 
 
 Similarly, we write each $P_i' = P_{i,0}' + \sum_{k=1}^r e_{i,k}'Q_k$ for $i = 1, \ldots, d$, where $P_{i,0}'\in \Gamma_0$ and each $e_{i,k}'\in \Z$ and also write each  $(\Phi_2^0)^{i}({\bf b})$ for $i = 1, \ldots, d$ as $T_{i,0}' + \sum_{k=1}^r f_{i,k}'Q_k$ with $T_{i,0}'\in \Gamma_0$ and each $f_{i,k}'\in \Z$. Comparing the torsion and free part of \eqref{eq29}, we have
 \begin{equation}\label{eq30}
 \sum_{i=1}^{d}U_{i,n_1}P_{i_0} + \sum_{i=0}^{d-1} V_{i,n_1}T_{i,0} = \sum_{i=1}^{d}U_{i,n_2}' P_{i_0}'+ \sum_{i=0}^{d-1} V_{i,n_2}'T_{i,0}'
 \end{equation}
and 
\begin{align}\label{eq31}
\begin{split}
 \sum_{i=1}^{d}U_{i,n_1}\sum_{k=1}^r e_{i,k}Q_k + &\sum_{i=0}^{d-1} V_{i,n_1}\sum_{k=1}^r f_{i,k}Q_k \\
& = \sum_{i=1}^{d}U_{i,n_2}' \sum_{k=1}^r e_{i,k}'Q_k+ \sum_{i=0}^{d-1} V_{i,n_2}'\sum_{k=1}^r f_{i,k}'Q_k.
 \end{split}
 \end{align}
 
 Since each of the points $P_{i,0}, T_{i,0}, P_{i,0}'$ and $T_{i,0}'$ belongs to a finite group $\Gamma_0$ and so they all have finite order bounded by $|\Gamma_0|$. Thus $\{U_{i,n_1} P_{i,0}\}_{n_1}$is preperiodic for each $i$. 
(Here we recall that a sequence $(W_n)_{n}$ is {\em preperiodic} if there exists a positive integer $\ell$ such that the subsequence $(W_n)_{n\geq \ell}$ is periodic). Therefore, the set of $(n_1, n_2)\in \N_0^2$ which satisfy \eqref{eq30} is a finite union of  sets of the form 
\[\{u_1m_1 + v_1, u_2m_2 +v_2: m_1, m_2\in \N_0\}\]
for some $u_1, u_2, v_1, v_2\in \N_0$.

Using Proposition $\ref{prop5}$, it suffices to prove that the set of all $(n_1, n_2)$ satisfying \eqref{eq31} is a finite union of linear families and singletons. Now comparing coefficient of each $Q_k$ in both sides of \eqref{eq31} for $k =1, \ldots, r$, we have 
\begin{equation}\label{eq32}
 \sum_{i=1}^{d}e_{i,k}U_{i,n_1} + \sum_{i=0}^{d-1} f_{i,k}V_{i,n_1} = \sum_{i=1}^{d}e_{i,k}'U_{i,n_2}' +  \sum_{i=0}^{d-1} f_{i,k}'V_{i,n_2}'.
\end{equation}
Since linear combination of linear recurrence sequence is a linear recurrence sequence (whose characteristic roots are among the characteristic roots of the original linear recurrence sequences), both sides of \eqref{eq32} are again linear recurrence sequences. Now \eqref{eq31} is equivalent to the simultaneous solutions of the equations 
\begin{equation}\label{eq33}
W_{k,n_1} = W_{k,n_2}' \quad \mbox{for each}\;\; k =1, \ldots, r,
\end{equation}
 where  $W_{k,n_1}: = \sum_{i=1}^{d}e_{i,k}U_{i,n_1} + \sum_{i=0}^{d-1} f_{i,k}V_{i,n_1}$ and $W_{k,n_2}' := \sum_{i=1}^{d}e_{i,k}'U_{i,n_2}' +  \sum_{i=0}^{d-1} f_{i,k}'V_{i,n_2}'$. Since $(W_{k,n_1})_{n_1\geq 0}$ is a linear recurrence sequence, by replacing $(W_{k,n_1})_{n_1\geq 0}$ by finitely many linear recurrence sequences, (which can be obtained by replacing $n_1$ by a suitable arithmetic progression $Mn_1 +\ell$), we may assume that the sequence $(W_{k,n_1})_{n_1\geq 0}$ is non-degenerate (see Section \ref{sec3.1}). Similarly, we can also assume $W_{k,n_2}'$ is non-degenerate.  Now we will discuss about the set 
 \begin{equation}\label{eq34a}
 Z:= \{(n_1, n_2) \in \N_0^2\mid W_{k,n_1} = W_{k,n_2}' \quad \mbox{for each}\;\; k =1, \ldots, r\}.
 \end{equation}
 Further, the characteristic roots of the linear recurrence sequences $V_{i, n_1}$ are among the roots of the minimal polynomial of $\Phi_1^0$, while the characteristic roots of $U_{i, n_1}$ are contained in the set consisting of $1$ and all the roots of the minimal polynomial of $\Phi_1^0$ (see \cite[Equation (14) and (17)]{g19}). Since by our assumption none of the roots of minimal polynomial of $\Phi_1^0$ are roots of unity, so the linear recurrence sequence $W_{k, n_1}$ is not of the form $P(n_1)\lambda_0^{n_1}$, where $\lambda_0$ is a root of unity. Similarly, $W_{k, n_2}$ is not of the form $Q(n_2)\delta_0^{n_2}$, where $\delta_0$ is a root of unity. Further, since $|\lambda_1|>\max\{1, |\lambda_2|, \ldots, |\lambda_r|\}$ and $\{|\delta_1|>\max\{1, |\delta_2|, \ldots, |\delta_s|\}$, then by Proposition \ref{prop4}, $Z$ is a finite union of singletons and a one-parameter linear familiy of solutions 
 \[(n_1, n_2) = (ut +u', vt+v')\cap \N_0^2,\quad  t\in \Z,\]
 with certain $u, v \in \Z \setminus \{0\}$ and $u', v'\in \Z$.
 


Thus, for each fixed $k\in \{1, \ldots, r\},\; Z$ is a finite union of singletons and a one-parameter linear family. Now applying Lemma \ref{prop5} finishes the proof of Theorem \ref{th2}. 
 
 \section{Concluding Remark}
 In the proof of Theorem \ref{th2},  one may notice that \eqref{eq4a} reduces to \eqref{eq34a}. If 
 we do not assume that none of the roots of the minimal polynomial of $\Phi_1^{0}$ and $\Phi_2^{0}$ are roots of unity in Theorem \ref{th2}, then there is a possibility that \eqref{eq34a} takes the form $P(n_1)\alpha^{n_1} = Q(n_2)\beta^{n_2}$, where $\alpha$ and $\beta$ are roots of unity and in such situation we will not get the solutions in required form. For example, if $W_{k, n_1} = n_1$ and $W_{k, n_2} = n_2^2$, then the set 
$Z$ in \eqref{eq34a} is $\{(n_1^2, n_1)\mid n_1\in \N\}$ which is not a linear family.

Further, if we do not assume \[|\eta_1|>\max\{1, |\eta_2|, \ldots, |\eta_s|\}, \mbox{and}\;\; \{|\delta_1|>\max\{1, |\delta_2|, \ldots, |\delta_t|\}\] in Theorem \ref{th2}, then the set \eqref{eq4a} lie in a finite union of singletons, one-parameter linear families and exponential families.

\subsection*{Acknowledgements}
We thank referee for her/his useful remarks.


\normalsize

\end{document}